\documentclass[11pt]{article}   
%
\usepackage{mathptmx}      
%
\usepackage[lined,boxed]{algorithm2e}
\usepackage{amsmath, amssymb}
\usepackage{color}
\usepackage{multirow}
\usepackage{bm}
\usepackage{graphicx}
\usepackage{tikz}
\usepgflibrary{shapes.misc}
\usetikzlibrary{%
   arrows,%
   calc,%
   fit,%
   patterns,%
   plotmarks,%
   shapes.geometric,%
   shapes.misc,%
   shapes.symbols,%
   shapes.arrows,%
   shapes.callouts,%
   shapes.multipart,%
   shapes.gates.logic.US,%
   shapes.gates.logic.IEC,%
   er,%
   automata,%
   backgrounds,%
   chains,%
   topaths,%
   trees,%
   petri,%
   mindmap,%
   matrix,%
   calendar,%
   folding,%
   fadings,%
   through,%
   positioning,%
   scopes,%
   decorations.shapes,%
   decorations.text,%
   decorations.pathmorphing,%
   decorations.pathreplacing,%
   decorations.footprints,%
   shadows}
\tikzset{
terminal/.style={
rectangle,minimum size=6mm,rounded corners=1mm,
very thick,draw=black!50,
top color=white,bottom color=black!20,
font=\ttfamily},
inicio/.style={
circle,minimum size=.3mm,
very thick, draw=red!50!black!50,
top color=white,bottom color=black!20,
font=\ttfamily}
}

%
%
\usepackage{amsthm}

\newtheorem{theorem}{Theorem}

\begin{document}

\title{Exact cost minimization of a series-parallel reliable system with multiple component choices using an algebraic method
\thanks{ This paper has been partially supported by Junta de Andaluc\'{\i}a under grant FQM-5849, and Ministerio de Ciencia e Innovaci\'{o}n MTM2010-19336, MTM2010-19576 and FEDER.}
}

\title{Exact cost minimization of a series-parallel system}        

\author{F. Castro, J. Gago, I. Hartillo, J. Puerto, J.M. Ucha
}




\maketitle

\thanks{{\em{In memory of our fellow Alejandro Fern\'{a}ndez-Margarit}}

\date{}

\begin{abstract}
The redundancy allocation problem is formulated minimizing the design cost for a series-parallel system with multiple component choices whereas ensuring a given system reliability level. The obtained model is a nonlinear integer programming problem with a non linear, non separable constraint. We propose an algebraic method, based on Gr\"obner bases, to obtain the exact solution of the problem. In addition, we provide a closed form for the required Gr\"obner bases, avoiding the bottleneck associated with the computation, and promising computational results.
\end{abstract}

\section{Introduction}
\label{intro}
System reliability is considered an important measure in the engineering design process. A series system is like a chain composed of links,
each of them representing a subsystem. The failure of one of these components means the failure of the whole system. In order to avoid this, it is usual to use redundant components in parallel to guarantee a certain level of reliability. These systems are called series-parallel systems.

Determining the optimal number of components in each subsystem is the so called reliability optimization problem. Two different approaches are usual:
\begin{itemize}
  \item maximize system reliability subject to system budget constraint, or
  \item minimize system cost subject to a required level of reliability.
\end{itemize}
Both problems are nonlinear integer programming problems, and they are NP-hard \cite{chern}. There are very few papers looking for their exact solutions, due to the difficulty of the problems. Those works use essentially Dynamic Programming \cite{dynamic}, branch and bound methods \cite{Djerdjour}, or Lagrangian relaxation \cite{ruan}, among others techniques. On the contrary, in the literature there are many heuristics and metaheuristic algorithms; such as those
based in Genetic Algorithms \cite{coit}, Tabu Search \cite{tabu} or Ant Colony Optimization \cite{ant}, among others.

In this paper we study the exact solution of one of the versions of the problem that minimizes the cost function of the chosen design, subject to a non linear constraint which describes the reliability of the considered system. For a fixed subsystem its inner components can be considered equal, as in \cite{Djerdjour}, or different, as in \cite{ruan}. If the components are equal the reliability function is separable and convex, and the problem can be reduced to a linear knapsack problem \cite{Djerdjour}. In the case of multiple component choices the reliability function is no longer separable. In \cite{dynamic}, the solution is found using dynamic programming methods. That approach presents two stages; in the first one the problem is restricted to each subsystem, with a level of reliability. Under this assumption the reliability function is separable, and the optimization problem can be reduced to a knapsack problem. Then the reliability levels of the subsystems are determined by a new dynamic programming process.

The solution method shown in \cite{ruan} uses an algorithm based on Lagrangian relaxations over two linear relaxations of the original problem. The first relaxation consists of deleting the non-linear reliability function, and adding certain linear constraints, one for each subsystem. The second relaxation assumes that the same type of component is going to be used in every subsystem, so that the problem has the form as in \cite{Djerdjour}.


Mainly, the algorithm of \cite{ruan} is a what their authors called a {\em cut and partition scheme} (a geometric branch and bound). The solution space is partitioned in boxes, which are divided and discarded for certain conditions.
The cuts are built from the best bound feasible solution of some Lagrangian relaxations. Such bounds allow to remove certain boxes depending on the improvement with respect to the current best point.

We address the problem via a different approach based on Gr\"obner bases. As introduction on this subject, we recommend the text books \cite{adams}, \cite{bertsimaslibro} and \cite{cox}.

Gr\"{o}bner bases were applied to Integer Linear Programming, by the first time, in \cite{conti}. Later, Tayur et al. \cite{tayur} introduced a new application framework, which solves nonlinear integer programming problems, with a linear objective function. This is exactly our framework, as in \cite{CGHPU}.

First, we consider a relaxed integer programming problem where all the restrictions are linear. Then we find the solution of the relaxed problem by computing a test set. By using the so called reverse test set, we can solve the complete problem, generating paths from the solution of the relaxed problem to a solution of the complete one. These paths increase the cost function at each step.

A test set for a linear integer programming problem is a set of directions that can be used to design descending algorithms with respect to a linear cost function. A test set can be computed from a Gr\"{o}bner basis of the toric ideal associated with the linear restrictions, with respect to an order given by the cost function.

One of the main tasks in the process described before is usually the calculation of the Gr\"obner basis. We construct a linear programming problem from the original one, removing the reliability function, and adding a new linear restriction. This constraint is obtained computing a feasible solution with a greedy algorithm. For the relaxed linear programming problem obtained in this way we explicitly give the associated Gr\"obner basis, and so the test set to solve the main problem. We point out here that any Gr\"{o}bner basis computation is done using closed formulas, thus avoiding the hard computation burden of reduction algorithms to compute Gr\"{o}bner bases.

The organization of the paper is as follows. In Section \ref{sec:model} we introduce the notation and describe the model of a parallel-series system with multiple component choices. In Section \ref{sec:greedy}, a greedy algorithm is described to compute a feasible point. Section \ref{sec:review-gr} is devoted to a brief introduction to the essential facts about Gr\"{o}bner bases. Section \ref{sec:teorema} contains the main result about the closed formula for the test set of the integer linear problem. Our computational experiments are reported in Section \ref{sec:computational}. Finally we draw some concluding remarks in Section \ref{sec:conclusiones}.

\section{General model} \label{sec:model}
In order to formulate the problem, some notation is first introduced.
\begin{itemize}
 \item $n$ number of subsystems.
 \item $k_i$ number of different types of available components for the $i$-th subsystem, $i=1,\ldots, n$.
 \item $r_{ij}$ reliability of the $j$-th component for the $i$-th subsystem, $i=1,\ldots,n$, $j=1,\ldots,k_i$.
 \item $c_{ij}$ cost of the $j$-th component for the $i$-th subsystem, $i=1,\ldots,n$, $j=1,\ldots,k_i$.
 \item $l_{ij}, u_{ij}$ lower/upper bounds of number of $j$ components for the $i$-th subsystem, $i=1,\ldots,n$, $j=1,\ldots,k_i$.
 \item $R_0$ admissible level of reliability of the whole system.
 \item $x_{ij}$ number of $j$ components used in the $i$-th subsystem, $i=1,\ldots,n$, $j=1,\ldots,k_i$.
\end{itemize}

In our model, some assumptions are considered:
\begin{itemize}
 \item Components have two states: working or failed.
 \item The reliability of each component is known and is deterministic.
 \item Failure of individual components are independent.
 \item Failed components do not damage other components or the system, and they are not repaired.
\end{itemize}

\begin{figure}

\begin{tikzpicture}[point/.style={coordinate},>=stealth',thick,draw=black!50,
caja invisible/.style={rectangle,minimum size=2pt,font=\ttfamily},
tip/.style={-,shorten >=1pt},every join/.style={rounded corners},
hv path/.style={to path={-| (\tikztotarget)}},
vh path/.style={to path={|- (\tikztotarget)}},
node distance=5mm and 5mm]

\matrix[row sep=1mm,column sep=3mm,ampersand replacement=\&] {
\& \node (p1) [point] {}; \&  \node (r1) [terminal]{$r_{1 1}$}; \& \node (p6) [point] {}; \&
\node (p11) [point] {}; \&  \node (r6) [terminal]{$r_{2 1}$}; \& \node (p16) [point] {}; \& \&
\node (p21) [point] {}; \&  \node (r11) [terminal]{$r_{n 1}$}; \& \node (p26) [point] {};\&
\\
\& \node (p2) [point] {};\& \node (r2) [terminal]{$r_{1 2}$}; \& \node (p7) [point] {};\&
\node (p12) [point] {}; \&  \node (r7) [terminal]{$r_{2 2}$}; \& \node (p17) [point] {};\& \&
\node (p22) [point] {}; \&  \node (r12) [terminal]{$r_{n 2}$}; \& \node (p27) [point] {};\&
\\
\node (in) [inicio] {}; \& \node (p3) [point] {};\& \node (c1) [caja invisible]{$\vdots$}; \&
\node (p8) [point] {}; \&
\node (p13) [point] {}; \&  \node (c3) [caja invisible]{$\vdots$}; \& \node (p18) [point] {}; \&
\node (c5) [caja invisible]{$\cdots$}; \&
\node (p23) [point] {}; \&  \node (c6) [caja invisible]{$\vdots$}; \& \node (p28) [point] {};\&
\node (out) [inicio] {};
\\
\& \node (p4) [point] {};\& \node (c2) [caja invisible]{}; \& \node (p9) [point] {};\&
\node (p14) [point] {}; \&  \node (c4) [caja invisible]{}; \& \node (p19) [point] {};\& \&
\node (p24) [point] {}; \&  \node (c7) [caja invisible]{}; \& \node (p29) [point] {};\&
\\
\& \node (p5) [point] {};\& \node (r5) [terminal]{$r_{1 k_1}$};  \& \node (p10) [point] {}; \&
\node (p15) [point] {}; \&  \node (r10) [terminal]{$r_{2 k_2}$}; \& \node (p20) [point] {};\& \&
\node (p25) [point] {}; \&  \node (r15) [terminal]{$r_{n k_n}$}; \& \node (p30) [point] {};\&
\\
};

{ [start chain]
\chainin (in);
\chainin (p3) [join];
{ [start branch=p3]
\chainin (p3) [join];
\chainin (p2) [join];
\chainin (r1) [join=by vh path];
\chainin (p7) [join=by hv path];
\chainin (r2) [join];
\chainin (p2) [join];
}

\chainin (p3) [join];
\chainin (p4) [join];
\chainin (r5) [join=by vh path];
\chainin (p9) [join=by hv path];
\chainin (p8) [join];
{ [start branch=p8]
\chainin (p8) [join];
\chainin (p7) [join];
}
{ [start branch=p8]
\chainin (p8) [join];
\chainin (p9) [join];
}
\chainin(p13) [join];
{ [start branch=p13]
\chainin (p13) [join];
\chainin (p12) [join];
\chainin (r6) [join=by vh path];
\chainin (p17) [join=by hv path];
\chainin (r7) [join];
\chainin (p12) [join];
}
\chainin (p13) [join];
\chainin (p14) [join];
\chainin (r10) [join=by vh path];
\chainin (p19) [join=by hv path];
\chainin (p18) [join];
{ [start branch=p18]
\chainin (p18) [join];
\chainin (p17) [join];
}
\chainin (c5) [join];
\chainin (p23) [join];
{ [start branch=p23]
\chainin (p23) [join];
\chainin (p22) [join];
\chainin (r11) [join=by vh path];
\chainin (p27) [join=by hv path];
\chainin (r12) [join];
\chainin (p22) [join];
}
\chainin (p23) [join];
\chainin (p24) [join];
\chainin (r15) [join=by vh path];
\chainin (p29) [join=by hv path];
\chainin (p28) [join];
{ [start branch=p28]
\chainin (p28) [join];
\chainin (p27) [join];
}
\chainin (out)[join];
};

\end{tikzpicture}
\caption{A series-parallel system with multiple choice components}
\label{sistema1}
\end{figure}
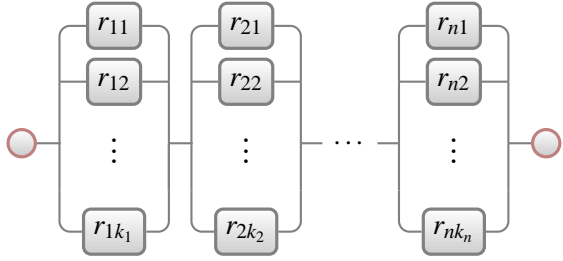

This model is illustrated in Figure \ref{sistema1}. It is a system with $n$ subsystems with the notation introduced before. The optimization problem can be formulated as:
\begin{equation}
\begin{array}{lcll}
  (RP)   & \min & \sum_{i=1}^n \sum_{j=1}^{k_i} c_{i j} x_{i j} \\
  & \mbox{s. t.}        & \\
        & & R(x) \geq R_0,  \\ \noalign{\medskip}
         & & \sum_{j=1}^{k_i} x_{ij}\geq 1, & i=1,\ldots,n, \label{eq:almenos} \\ \noalign{\medskip}
       & & 0 \leq l_{ij}\leq x_{i j} \leq u_{ij}, & i=1,\ldots,n,  \\
        & &  & j=1,\ldots,k_i, \\
        & & x_{ij} \in {\mathbb Z}_+ & \mbox{for all } i, j,
\end{array}
\end{equation}
where $R(x) = \prod_{i=1}^n (1-\prod_{j=1}^{k_i}(1-r_{i j})^{x_{i j}})$.
The first $n$ linear inequalities in (\ref{eq:almenos}) assert that each subsystem must have, at least, one component.

As usual, we can make a change of variables $y_{ij}=x_{ij}-l_{ij}$, so that we can assume $l_{ij} = 0$. This does not alter the equations of $(RP)$, and some of the last equations can be redundant. Hence it can be assumed $l_{ij}=0$ without loss of generality, and:
$$
\begin{array}{lcll}
 (RP) &  \min & \sum_{i=1}^n \sum_{j=1}^{k_i} c_{i j} x_{i j}\\
 & \mbox{s. t.} & \\
      &       & R(x) \geq R_0, \\ \noalign{\medskip}
      &       & \sum_{j=1}^{k_i} x_{ij}\geq 1,& i=1,\ldots,n. \\ \noalign{\medskip}
      &       & 0\leq x_{i j} \leq u_{ij}, & i=1,\ldots,n, \\ & & & j=1,\ldots,k_i, \\
      &       & x_{ij} \in {\mathbb Z}_+ & \mbox{for all } i, j.
\end{array}
$$
A feasible solution is sometimes called a reliable solution because it ensures a reliability greater than or equal to $R_0$.

\section{Computing a reliable system with a greedy procedure} \label{sec:greedy}

The main step used in the algebraic algorithm described in this article is to consider an integer linear programming problem $(LRP)$, relaxed from the original problem $(RP)$. There is only one nonlinear constraint in $(RP)$, the equation which ensures the reliability of the whole system. Removing the nonlinear constraint we get an integer linear programming problem:
$$
\begin{array}{lcll}
 (LRP1) &  \min & \sum_{i=1}^n \sum_{j=1}^{k_i} c_{i j} x_{i j}\\
 & \mbox{s. t. } & \\
        &     & \sum_{j=1}^{k_i} x_{ij}\geq 1, & i=1,\ldots,n. \\ \noalign{\medskip}
        &     & 0\leq x_{i j} \leq u_{ij}, & i=1,\ldots,n, \\ & & & j=1,\ldots,k_i, \\
         & & x_{ij} \in {\mathbb Z}_+ & \mbox{for all } i, j.
\end{array}
$$
In our solution technique, we start from the solution of the linear programming problem $(LRP1)$, and following the directions given by the test set of $(LRP1)$ we follow a descent path to the solution of the complete problem $(RP)$. If the linear relaxation is too weak, the paths to be followed to get to the optimal solution of $(RP)$ will be very long, and the number of points to be processed is huge.

To avoid this problem we add a new linear equation. We need a feasible point $y^0$ of $(RP)$, and there are lots of heuristic methods to obtain such a point. In our case, we use a greedy algorithm similar to \cite{Djerdjour} or \cite{ruan}.

At the beginning of the greedy algorithm, $y^0$ describes the system with the maximum number of components of every type. If the reliability of that system is less than $R_0$, the problem is unfeasible. We consider $I$ the set of all pairs $(i,j)$, which describes the $j$-th component for the $i$-th subsystem. For each $(i,j)$, we calculate the rate $t_{ij} = \frac{c_{ij}}{-\log(1-r_{ij})}$ between cost and reliability, and order $I$ non increasingly by these rates (ties are solved by lex order, for example). For the first index $(i_0,j_0)$ in $I$, we subtract components of type $(i_0,j_0)$ from $y^0$ until it is non reliable, there is no such component or the $i_0$-th subsystem is empty. If the solution obtained by this process is non reliable, or the $i_0$-th subsystem is empty, one $(i_0,j_0)$ component is added. Then we take the next index in the set $I$ and repeat the procedure, until the index set $I$ has been completely processed.

\begin{algorithm}
  \DontPrintSemicolon
  \SetAlgoLined
  \KwData{$r_{ij}$, vector $\bm{c}$}
  \KwResult{$\bm{y}^0$ feasible point}
  $\bm{y}^0=(u_{11},\ldots,u_{nk_n})$ \;
  $t=\left(\frac{c_{11}}{-\log(1-r_{11})},\ldots,\frac{c_{nk_n}}{-\log(1-r_{nk_n})}\right)$ \;
  $I=\{(1,1),\ldots,(1,k_1)\ldots,(n,k_n)\}$ \;
  Order $I$ non increasingly by $t_{i,j}$ \;
 \ForAll{$(i,j)\in I$}
 {
   Reliable=TRUE \;
   SubsystemNonEmpty=TRUE \;
  \While{Reliable {\bf and} SubsystemNonEmpty {\bf and} $y^0_{i,j}>0$  }{
     $y^0_{i,j}=y^0_{i,j}-1$ \;
     \If{$\sum_k y^0_{ik}<1$}
      {
      SubsystemNonEmpty=FALSE \;
      }
     \If{$R(y^0)<R_0$} {
        Reliable=FALSE \;
        }
     \If{Reliable=FALSE {\bf or} SubsystemNonEmpty=FALSE}{
       $y^0_{ij}=y^0_{ij}+1$ \;
       }
   }
  }
\caption{Greedy algorithm}
\label{greedy}
\end{algorithm}

Using the above greedy algorithm, we obtain a feasible point $y^0$, with a cost $\sum_{ij}c_{ij}y^0_{ij}=c^0$. The optimal solution of $(RP)$ has a cost less than or equal to $c^0$, so we can add to $(RP)$ a valid inequality stating this condition and the problem has an equivalent form:
$$
\begin{array}{lcll}
 (RP) &  \min  & \sum_{i=1}^n \sum_{j=1}^{k_i} c_{i j} x_{i j}\\
& \mbox{s. t. } \\
& & R(x) \geq R_0, \\ \noalign{\medskip}
& & \sum_{j=1}^{k_i} x_{ij}\geq 1, & i=1,\ldots,n, \\ \noalign{\medskip}
& & 0\leq x_{i j} \leq u_{ij}, & i=1,\ldots,n, \\ & & & j=1,\ldots,k_i, \\ \noalign{\medskip}
& &\sum_{i=1}^n \sum_{j=1}^{k_i} c_{ij}x_{ij} \leq c^0, \\
        & & x_{ij} \in {\mathbb Z}_+ & \mbox{for all } i, j.
\end{array}
$$
From this formulation, we have the new integer linear problem
$$
\begin{array}{lcll}
 (LRP) &  \min & \sum_{i=1}^n \sum_{j=1}^{k_i} c_{i j} x_{i j} \\
& \mbox{s. t. } & \\
& &  \sum_{j=1}^{k_i} x_{ij} \geq 1, & i=1,\ldots,n, \\ \noalign{\medskip}
& & 0 \leq x_{i j} \leq u_{ij}, & i=1,\ldots,n, \\ & & & j=1,\ldots,k_i, \\ \noalign{\medskip}
& & \sum_{i=1}^n \sum_{j=1}^{k_i} c_{ij} x_{ij} \leq c^0, \\
        & & x_{ij} \in {\mathbb Z}_+ & \mbox{for all } i, j.
\end{array}
$$

\section{A review on integer programming and Gr\"obner bases} \label{sec:review-gr}

In this section, we recall the concepts and algorithms used to solve Integer Linear Programming problems from an algebraic point of view, and the walk back procedure for nonlinear integer programming problems based on test sets. To this end, we have followed \cite{sturmfels} and \cite{tayur}.

\subsection{Gr\"{o}bner bases}

Denote by $k[\bm{x}] = k[x_1,\ldots, x_N]$ the ring of polynomial with coefficients in a field $k$. In our case, $k$ will be ${\mathbb R}$. The ideal generated by a subset ${\mathcal F} \subset k[\bm{x}]$ is the set $\langle {\mathcal F} \rangle$ consisting of all linear combinations:
$$
\langle {\mathcal F} \rangle = \{ h_1 f_1 + \cdots + h_r f_r ~:~ f_1, \ldots, f_r \in {\mathcal F}, h_1, \ldots, h_r \in k[\bm{x}] \}.
$$

A term order on ${\mathbb N}^N$ is a total order $\prec$ satisfying the following properties:
\begin{itemize}
\item $\prec$ is compatible with sums, i.e., $\alpha \prec \beta\Rightarrow \alpha+\gamma \prec \beta+\gamma$, for all $\alpha,\beta,\gamma\in{\mathbb N}^N$.
\item $\prec$ is a well-ordering, i.e., $0 \prec \alpha$ for all $\alpha\in{\mathbb N}^N$, $\alpha\neq 0$.
\end{itemize}

If we fix a term order $\prec$, then every non zero polynomial $f$ has a unique initial term ${\rm in}_{\prec}(f) = a \bm{x}^{\alpha}$. It is the monomial $a \bm{x}^{\alpha}$ where ${\alpha}$ is the largest term appearing in $f$ for the term order $\prec$. We are particularly interested in two term orders:
\begin{enumerate}
  \item The lexicographic order $<_{\rm lex}$. For every $\alpha, \beta \in {\mathbb N}^N$, we say $\alpha >_{\rm lex} \beta$ if, in the vector difference $\alpha - \beta \in {\mathbb Z}^N$, the leftmost nonzero entry is positive.
  \item The vector induced order $<_{\bm{c}}$. We consider a vector $\bm{c} \in {\mathbb N}^N$. Given $\alpha, \beta \in {\mathbb N}^N$, we say $\alpha >_{\bm{c}} \beta$ if
      $$
      \bm{c}^t \alpha > \bm{c}^t \beta \mbox{ or } \bm{c}^t \alpha = \bm{c}^t \beta, \mbox{ and } \alpha >_{\rm lex} \beta.
      $$
\end{enumerate}
For example, consider the polinomial $f = 6x_1 x_2^2 x_3 + 7x_3^2 - 5x_1^3 + 4x_1^2 x_3^2$ and the vector $\bm{c} = (3,2,2)^t$. Then
$$
{\rm in}_{<_{\rm lex}}(f) = -5x_1^3, {\rm in}_{<_{\bm{c}}} (f) = 4x_1^2 x_3^2.
$$
Of course, we can reorder the variables $x_i$, and get a new term order. In general, the notation $<_{\bm{c}}$ means a term order which respect the partial order defined by the vector $\bm{c}$ and then a tie-break term order, so if we change the lexicographic order in the definition on $<_{\bm{c}}$, we get another vector induced order.

Suppose that $J$ is an ideal in $k[\bm{x}]$, and $\prec$ is a given term order. Then its initial ideal is the ideal generated by the initial terms of the polynomials in $J$:
$$
{\rm in}_{\prec}(J) = \langle {\rm in}_{\prec} (f) ~:~ f\in J \rangle.
$$
A finite subset ${\mathcal G}$ of $J$ is a Gr\"{o}bner basis with respect to the term order $\prec$ if the initial terms of the elements in ${\mathcal G}$ suffice to generate the initial ideal:
$$
{\rm in}_{\prec}(J) = \langle {\rm in}_{\prec} (g) ~:~ g \in {\mathcal G} \rangle.
$$
Fixed an ideal and a term order, a Gr\"obner basis is not unique. Adding two more conditions, the uniqueness is guaranteed. The reduced Gr\"obner basis of $J$ with respect to $\prec$ is a Gr\"obner basis ${\mathcal G}_{\prec}$ of $J$ such that:
\begin{itemize}
\item ${\rm in}_{\prec}(g_i)$ has unit coefficient for each $g_i \in {\mathcal G}_{\prec}$.
\item For each $g_i \in {\mathcal G}_{\prec}$, no monomial in $g_i$ lies in $\langle {\rm in}_{\prec}({\mathcal G}_{\prec} \backslash g_i) \rangle$.
\end{itemize}
Every ideal $J$ has a unique reduced Gr\"{o}bner basis for each term order.

\subsection{Test set}
Consider a linear programming problem:
$$
\begin{array}{lcll}
LP(\bm{b}) & \min & \bm{c}^t \cdot \bm{x}\\
& \mbox{s. t. }&  \\
& & A\cdot \bm{x}=\bm{b}, \\
& & \bm{x}\in {\mathbb Z}_+^N, \\
\end{array}
$$
where $ A\in{\mathbb Z}^{d\times N}, \bm{b} \in{\mathbb Z}^d, \bm{c}\in{\mathbb R}^N$. The notation $LP(\bm{b})$ denotes the linear programming problem with the right-hand-side restrictions fixed to $\bm{b}$. When we write $(LP)$, we note all integer programming problems, obtained by varying the right-hand-side vector $\bm{b}$, fixing $A$ and the cost function $\bm{c}$. Consider the map $\pi: {\mathbb N}^N \to {\mathbb Z}^d$ defined by $\pi(\bm{x}) = A \bm{x}$. Given a vector $\bm{b} \in {\mathbb Z}^d$, the set $\pi^{-1}(\bm{b}) = \{ \bm{u} \in {\mathbb N}^N ~:~ \pi(\bm{u}) = \bm{b} \}$ is the fiber of $(LP)$ over $\bm{b}$.

We group points in ${\mathbb N}^N$ according to increasing cost value $\bm{c}^t \bm{x}$, and refine this order to a total order $<_{\bm{c}}$ breaking ties among points with the same cost value by adopting some term order (lexicographic, for example, as defined in the previous section). It is the vector induced order.

A set $G_{<_{\bm{c}}} \subset {\mathbb Z}^N$ is a test set for the family of integer problems $(LP)$ with respect to the matrix $A$ and the order $<_{\bm{c}}$ if
\begin{itemize}
\item for each nonoptimal point $\alpha$ in each fiber of $(LP)$, there exists $g\in G_{<_{\bm{c}}}$ such that $\alpha-g$ is a feasible solution in the same fiber and $\alpha-g<_{\bm{c}}\alpha$,
\item for the optimal point $\beta$ in a fiber of $(LP)$, $\beta-g$ is unfeasible for every $g\in G_{<_{\bm{c}}}$
\end{itemize}
A test set for $(LP)$ gives an obvious algorithm to solve an integer program, provided we know a feasible solution to this problem. At every step of this algorithm, we have two different cases:
\begin{itemize}
  \item There exists an element in the test set which, when subtracted from the current point, yields an improved point. We are then in a nonoptimal point, but we get a better one.
  \item There will not exist such an element in the set, so we are in the optimum of the fiber.
\end{itemize}

\subsection{Toric ideal}
We define $I_A$ the toric ideal associated with $A$ as
$$
I_A = \langle \bm{x}^{\alpha} - \bm{x}^{\beta} ~:~ A \alpha = A \beta , \alpha, \beta \in {\mathbb N}^N \rangle.
$$
Given an integral vector $\gamma \in {\mathbb Z}^N$, we can write it uniquely as $\gamma = \gamma^+ - \gamma^-$, where $\gamma^+, \gamma^- \in {\mathbb N}^N$ and have disjoint supports. It is well known (\cite{sturmfels}) that
$$
I_A = \langle \bm{x}^{\alpha^+} - \bm{x}^{\alpha^-} ~:~ A \alpha = \bm{0}, \alpha \in {\mathbb Z}^N \rangle.
$$

The relationship between the previous concepts is that the reduced Gr\"obner basis ${\mathcal G}_{<_{\bm{c}}}$ of $I_A$ with respect to the order $<_{\bm{c}}$ allows us to compute a uniquely defined minimal test set $G_{<_{\bm{c}}}$ for $(LP)$. The reduced Gr\"{o}bner basis is formed by binomials
$$
{\mathcal G}_{<_{\bm{c}}} = \{ \bm{x}^{\alpha_i} - \bm{x}^{\beta_i}, i=1,2,\ldots,r \}, \mbox{ with }{\rm in}_{<_{\bm{c}}} ( \bm{x}^{\alpha_i} - \bm{x}^{\beta_i}) = \bm{x}^{\alpha_i},
$$
and then the test set is expressed as
$$
G_{<_{\bm{c}}} = \{ \alpha_i - \beta_i, i=1,2,\ldots, r \}.
$$

\subsection{Walk back procedure}
Basically the walk back procedure gives an algorithm which computes the optimum for a nonlinear integer programming problem under some conditions. The integer programming problem $(RP)$ introduced in Section \ref{sec:model} is not linear. It has a nonlinear constraint (the reliability condition), while the rest of the restrictions are linear and the cost function is also linear. These are the conditions required to use the walk back procedure, introduced in \cite{tayur}. In Algorithm \ref{walkbackalgorithm}, it is used the directed graph defined by the Gr\"obner basis over the feasible points, but directions are reversed in the skeleton. In each step, elements $w=\alpha+g$ in the reverse skeleton are computed, where $A \alpha = \bm{0}$ and $g$ is an element in the Gr\"{o}bner basis.

In general, Algorithm \ref{walkbackalgorithm}, uses the following notation. We denote by $(RP)$ the entire non linear integer programming problem, $(LRP)$ the relaxed linear integer programming problem which arises from $(RP)$. Let $\beta$ be the optimum of $(LRP)$. If $\beta$ is feasible for $(RP)$, then it is the solution to $(RP)$. If it is non feasible, then the reverse skeleton is needed.

Let $P(\alpha)$ denote the path, in the directed graph (reversed) of the linear integer programming Problem $(LRP)$, from the optimum $\beta$ for $(LRP)$ to a feasible point $\alpha$ for $(LP)$. There is always one. Any solution of $(RP)$ is feasible for $(LRP)$, so the objective is to find such a path, in an ordered way. In each reversed step the cost function increases, so the minimum cost feasible points for $(RP)$ are found first.

\begin{algorithm}
\DontPrintSemicolon
  \SetAlgoLined
  \SetKwFunction{groebner}{groebner}
  \SetKwFunction{optimo}{OptimumLP}
  \SetKwFunction{greedy}{greedy}
  \KwData{Matrix $A$, vectors $\bm{b}, \bm{c}$, non-linear restrictions}
  \KwResult{Optimum}
  ${\mathcal P} = \{ P(\beta) \}$ \;
  $y^0 = \greedy(RP)$ \;
  $Y = \{ y^0 \}$\;
  ${\mathcal G}_{<_{\bm{c}}} = \groebner(I_A)$ with respect to $<_{\bm{c}}$ \;
  $\beta = $ optimum for relaxed $LRP$ \;
  \Repeat{all paths in ${\mathcal P}$ are pruned}
  {
  \ForAll{$P(\alpha) \in {\mathcal P}$}
  {
  \ForAll{$g \in {\mathcal G}_{<_{\bm{c}}}$}
  {
  $w = \alpha + g$ \;
  \If{$w$ is a feasible point of (LRP)}
  {
    \eIf{$w$ is feasible for (RP)}
    {
     $Y = Y \cup \{w\} $\;
     Prune $P(w)$ \;
    }
    {
      \If{$y <_{\bm{c}} w$ for some $y \in Y$}
      {
       Prune $P(w)$\;
      }
    ${\mathcal P} = {\mathcal P} \cup \{ P(w) \}$\;
    }
   }
  }
  }
  Delete $P(\alpha)$ from ${\mathcal P}$ \;
  }
  $Optimum$ = Select minimum $<_{\bm{c}}$ element from $Y$\;
\caption{Walk back procedure}
\label{walkbackalgorithm}
\end{algorithm}

\section{The test set for the relaxed linear problem} \label{sec:teorema}

Once the $(LRP)$ problem is reinforced by means of the linear constraint that comes after a feasible solution of $(RP)$ is found by the greedy algorithm, the relaxed linear problem is:

$$
\begin{array}{lcll}
 (LRP) &  \min & \sum_{i=1}^n \sum_{j=1}^{k_i} c_{i j} x_{i j}\\
& \mbox{ s. t. } & \\
& & \sum_{j=1}^{k_i} x_{ij}\geq 1, & i=1,\ldots,n, \\ \noalign{\medskip}
& & 0\leq x_{i j} \leq u_{ij}, & i=1,\ldots,n, \\ & & & j=1,\ldots,k_i, \\ \noalign{\medskip}
& &\sum_{i=1}^n \sum_{j=1}^{k_i} c_{ij}x_{ij} \leq c^0, \\
& & x_{ij} \in {\mathbb Z}_+ & \mbox{for all } i,j.
\end{array}
$$

Each inequality must be converted to an equality, so we must introduce a new slack variable for each inequality:

$$
\begin{array}{lcll}
 (LRP) &  \min & \sum_{i=1}^n \sum_{j=1}^{k_i} c_{i j} x_{i j} \\
& \mbox{s.t.} & \\
& &  \sum_{j=1}^{k_i} x_{ij} - d_i = 1, & i=1,\ldots,n, \\ \noalign{\medskip}
& & x_{i j} + t_{ij}= u_{ij}, & i=1,\ldots,n, \\ & & & j=1,\ldots,k_i, \\
& & \sum_{i=1}^n \sum_{j=1}^{k_i} c_{ij} x_{ij} + b= c^0, \\
& & x_{ij} \in {\mathbb Z}_+ & \mbox{for all } i,j.
\end{array}
$$
If we put $N = k_1+\ldots+k_n$ and
$$D_{n \times N} =
    \begin{pmatrix}
      \overbrace{1  \ldots  1}^{k_1} & \overbrace{0 \ldots  0}^{k_2} & \ldots & \overbrace{0  \ldots  0}^{k_n} \\
       0  \ldots 0 & 1  \ldots  1 & \ldots & 0  \ldots  0 \\
      && \ddots \\
      0  \ldots  0 & 0  \ldots  0 & \ldots & 1  \ldots  1 \\
    \end{pmatrix}
 $$
 the restrictions in matrix form can be written as
$$
 \begin{pmatrix}
  D & -I_n & {\bf 0}_{n \times N} & {\bf 0}_{n \times 1} \\
  I_N & {\bf 0}_{N \times n} & I_N & {\bf 0}_{N \times 1} \\
  {\bf c}_{1 \times N} & {\bf 0}_{1 \times n} & {\bf 0}_{1 \times N} & 1\\
 \end{pmatrix}
 \cdot
 \begin{pmatrix}
  \bm{x}_{N \times 1} \\
  \bm{d}_{n \times 1} \\
  \bm{t}_{N \times 1} \\
  b \\
 \end{pmatrix}=
\begin{pmatrix}
  {\bf 1}_n \\
  \bm{u}_1 \\ \vdots \\ \bm{u}_n \\ c^0
 \end{pmatrix},
$$
 where
 $$
 \begin{array}{l}
 {\bf c}_{1 \times N} = \left( \begin{array}{ccccccc} c_{11} & \ldots & c_{1 k_1} & \ldots & c_{n1} & \ldots & c_{n k_n} \end{array} \right), \\
 \bm{x} = \left( \begin{array}{c} x_{11} \\ \vdots \\ x_{1k_1} \\ \vdots \\ x_{n1} \\ \vdots \\ x_{nk_n} \end{array} \right), \bm{t} = \left( \begin{array}{c} t_{11} \\ \vdots \\ t_{1k_1} \\ \vdots \\ t_{n1} \\ \vdots \\ t_{nk_n} \end{array} \right),
 \bm{d} = \left( \begin{array}{c} d_1 \\ \vdots \\ d_n \end{array} \right), \bm{u}_i = \left( \begin{array}{c} u_{i1} \\ \vdots \\ u_{ik_i} \end{array} \right), i=1,\ldots,n,
 \end{array}
 $$
 and ${\bf 1}_n$ denotes the $n$-vector with all the componentes equal to $1$. We can assume that, for each $i=1,\ldots,n$, the costs $c_{ij}$ are ordered in descending order: $c_{iq} \ge c_{ip}$ if $q < p$.
 Let
 $$
 \bm{z} = (x_{11}, \ldots, x_{nk_n}, d_1, \ldots, d_n, t_{11}, \ldots, t_{nk_n}, b) = (\bm{x}, \bm{d}, \bm{t}, b),
 $$
 and consider the following set of binomials in $k[\bm{z}]$:
 $$
 {\mathcal G} = \{ \underline{x_{ik} d_i} - t_{ik} b^{c_{ik}}, \underline{x_{iq} t_{ip}} - x_{ip} t_{iq} b^{c_{iq} - c_{ip}} \},
 $$
   for $i=1,\ldots,n, k=1,\ldots, k_i, 1 \le q < p \le k_i$. Let $>$ be a term order in $k[\bm{z}]$ such that $\bm{x} > \bm{d} > \bm{t} > b$. Within each block, the variables are sorted lexicographically as follows:
   $$
   x_{11} > \cdots > x_{1k_1} > x_{21} > \cdots > x_{nk_n}, t_{11} > \cdots > t_{1k_1} > t_{21} > \cdots > t_{nk_n}, d_1 > \cdots > d_n.
   $$
 \begin{theorem}\label{thm:base}
   The set ${\mathcal G}$ is the reduced Gr\"{o}bner basis of the toric ideal $I_A$ with respect to the term orden $>$. Moreover, ${\mathcal G}$ is the reduced Gr\"{o}bner basis with respect to the order $<_{\bm{c}}$ induced by the cost vector ${\bf c}$.
 \end{theorem}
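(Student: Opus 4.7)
The plan is to verify in turn that (i) every element of $\mathcal{G}$ lies in $I_A$, (ii) the orders $>$ and $<_{\bm c}$ select the same leading monomial in each binomial of $\mathcal{G}$, (iii) $\mathcal{G}$ is a Gr\"obner basis, and (iv) it is reduced. For (i), a direct calculation using the block form of $A$ shows that the exponent difference $\bm e_{x_{ik}} + \bm e_{d_i} - \bm e_{t_{ik}} - c_{ik}\bm e_b$ lies in $\ker A$ (cancellation on the $i$-th reliability row, on the $(i,k)$-upper-bound row, and on the cost row), and entirely analogously for the second family.

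For (ii), the cost vector used in $<_{\bm c}$ is $\bm c$ extended by zeros on the $\bm d$, $\bm t$ and $b$ coordinates, so the ``head'' $x_{ik}d_i$ (resp.\ $x_{iq}t_{ip}$) has cost $c_{ik}$ (resp.\ $c_{iq}$), strictly larger than the tail's cost $0$ (resp.\ $c_{ip}$) unless equality holds; in every tie case the lex tiebreaker favors the monomial whose $\bm x$-factor has the smaller index, which is again the head. Thus both orders produce the leading terms $x_{ik}d_i$ and $x_{iq}t_{ip}$, $q<p$, so the two initial ideals coincide; since a reduced Gr\"obner basis is determined by the initial ideal, it suffices to prove (iii)--(iv) for one of the two orders.

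The heart of the proof is step (iii). I would use the standard toric criterion: a finite subset of $I_A$ is a Gr\"obner basis iff every fiber of $\pi:\mathbb{N}^{2N+n+1}\to\mathbb{Z}^{N+n+1}$, $\bm z\mapsto A\bm z$, contains at most one monomial not divisible by any of its leading terms. Reading off the leading terms, a monomial $\bm x^{\bm a}\bm d^{\bm\delta}\bm t^{\bm\tau}b^{\beta}$ is \emph{standard} iff for every subsystem $i$, (a) $\delta_i>0$ forces $\bm a_i\equiv 0$ and (b) one never has $a_{iq}>0$ together with $\tau_{ip}>0$ for $q<p$. I would then establish uniqueness in each fiber by an explicit greedy reconstruction from $\bm v=A\bm h$: the sign of $v_i^{(1)}$ forces either $\bm a_i\equiv 0,\ \delta_i=-v_i^{(1)}$ or $\delta_i=0,\ \sum_j a_{ij}=v_i^{(1)}$; in the second case condition (b) combined with $a_{ij}+\tau_{ij}=v_{ij}^{(2)}$ forces a right-loaded fill, $a_{ij}=\min(v_{ij}^{(2)},\text{remaining})$ for $j=k_i,k_i-1,\ldots,1$, until the quota $v_i^{(1)}$ is exhausted; finally $\beta=v^{(3)}-\sum_{ij}c_{ij}a_{ij}$. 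As an independent sanity check I would also run Buchberger's S-pair test: the only non-disjoint pairs of leading monomials arise from sharing $d_i$, $x_{iq}$, or $t_{ip}$, and each of those S-polynomials reduces to zero in one or two rewrites using the remaining binomials of $\mathcal{G}$.

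Step (iv) is then routine: every leading coefficient is $1$, and neither tail $t_{ik}b^{c_{ik}}$ nor $x_{ip}t_{iq}b^{c_{iq}-c_{ip}}$ is divisible by a leading term of $\mathcal{G}$---the first has no $\bm x$-factor at all, while the second contains only the $\bm x$-factor $x_{ip}$ paired with $t_{iq}$ where $q<p$, which reverses the required pattern $x_{iq'}t_{ip'}$, $q'<p'$. The main obstacle I foresee is the bookkeeping inside step (iii): one must dispatch the sign of $v_i^{(1)}$, handle the possible meeting of $\operatorname{supp}(\bm a_i)$ and $\operatorname{supp}(\bm\tau_i)$ at a single index, and verify the non-negativity of $\beta$ at the greedy output (which ultimately follows from the fact that the $\mathcal{G}$-reductions never increase $\sum_{ij}c_{ij}a_{ij}$, so the greedy value is bounded above by the original cost of any representative of the fiber). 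These are case-work issues rather than conceptual ones, and the parallel S-pair verification provides a reassuring cross-check.
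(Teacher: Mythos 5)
Your proposal is correct, and its central step is carried out by a different (though closely related) route than the paper's. For the Gr\"obner property, the paper argues directly on kernel vectors of $A$: given any $\bm{y}\in\ker A$ it splits into the cases $y_x=0$ (impossible), $y_d=0$, and the general case, and in each case uses the sign pattern forced by the rows of $A$ to exhibit an element of $\mathcal{G}$ whose leading term divides $\bm{z}^{\bm{y}^+}$; this shows the initial term of every binomial of $I_A$ lies in $\langle\mathrm{in}(\mathcal{G})\rangle$. You instead characterize the monomials not divisible by any marked leading term (your conditions (a) and (b)) and show by a forced right-loaded reconstruction that each fiber contains at most one such monomial. Both are standard toric criteria and your greedy reconstruction is sound: the sign of $v_i^{(1)}$ does dispatch between $\delta_i=-v_i^{(1)},\ \bm a_i=0$ and $\delta_i=0$ with a saturated tail fill, and non-negativity of $\beta$ is not even needed for the ``at most one'' conclusion. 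Your route buys two things the paper handles more casually: since the fiber criterion depends only on the marking and not on the term order, the coincidence of the reduced bases for $>$ and $<_{\bm c}$ falls out immediately once you check (as you do, correctly, including the lex tie-break when $c_{iq}=c_{ip}$) that both orders mark the same terms; and you actually verify reducedness, where the paper only asserts ``clearly, it is reduced.'' The paper's sign-pattern argument is somewhat shorter and avoids the bookkeeping you flag, but it must separately revisit the order $<_{\bm c}$ at the end. The S-pair cross-check you mention is redundant but harmless. The only caveat is that your text is a plan rather than a written-out proof; all the steps you outline are correct and complete in substance.
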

 \begin{proof}
   The proof follows similar steps and notation that \cite[Thm. 4]{tayur}. First of all, the set ${\mathcal G}$ is a subset of $I_A$, because all the binomials $\bm{z}^{\alpha} - \bm{z}^{\beta}$ in ${\mathcal G}$ verify $A \alpha = A \beta$.

   The initial term of every binomial in ${\mathcal G}$ with respect to $>$ is the underlined term. It is enough to show that for every binomial $\bm{z}^{\alpha} - \bm{z}^{\beta} \in I_A$, with initial term $\bm{z}^{\alpha}$, there is some $g \in {\mathcal G}$ whose initial term divides $\bm{z}^{\alpha}$. By definition of toric ideal, $\bm{z}^{\alpha} - \bm{z}^{\beta} \in I_A$ if and only if $\alpha - \beta \in K = \{ \bm{y} \in {\mathbb Z}^s ~:~ A \bm{y} = {\bf 0} \}, s= n+2N+1$. We denote an element $\bm{y}$ in $K$ by $\bm{y} = (y_{x}, y_d, y_t, y_b)$ to indicate the correspondence between components of $\bm{y}$ and the columns of $A$. In addition, we denote the components of $y_x$ by $(X_{11}, \ldots, X_{nk_n})$, and similarly for the others. We classify the elements in $K$ in the following manner:
   \begin{enumerate}
     \item Let $K_1 = \{ \bm{y} \in K ~:~ y_x = 0 \}$. Now $\bm{y} \in K_1$ if and only if $(y_d, y_t, y_b) \in {\mathbb Z}^{s_1}, s_1 = n+N+1$ belongs to the lattice $S' = \{ \bm{w} \in {\mathbb Z}^{s_1} ~:~ A' \bm{w} = 0 \}$ where
         $$
         A' = \left( \begin{array}{ccc} -I_n \\ & I_N \\ & & 1 \end{array} \right).
         $$
         But $S' = 0$ since $A'$ is a non singular matrix. Therefore $K_1 = 0$. This implies that there are no binomials of the form $\bm{z}^{\alpha} - \bm{z}^{\beta}$ that do not contain the variables $x_{ij}$.
     \item Let $K_2 = \{ \bm{y} \in K ~:~ y_d = 0 \}$. Again $\bm{y} \in K_2$ if and only if $(y_x, y_t, y_b) \in {\mathbb Z}^{s_2}, s_2 = 2N+1$ belongs to the lattice $S'' = \{ \bm{w} \in {\mathbb Z}^{s_2} ~:~ A'' \bm{w} = 0 \}$, where
         $$
         A'' = \left( \begin{array}{ccc} D & 0 & 0 \\ I_N & I_N & 0 \\ {\bf c} & 0 & 1 \end{array} \right).
         $$
         Let $X_{iq}$ be the left most nonzero component of $y_x$. We may assume that $X_{iq} > 0$ since $S$ is the set of integer points in a vector space which implies that it contains the negative of every element in it. The $i$-th row of matrix $D$ in $A''$ implies that there exists some $p > q$ such that $X_{ip} < 0$. Therefore,
         $$
         x_{iq} \mbox{ divides } \bm{z}^{\bm{y}^+} \mbox{ and } x_{ip} \mbox{ divides } \bm{z}^{\bm{y}^-}.
         $$
         Consider now the rows given by the block $\left( \begin{array}{ccc} I_N & I_N & 0 \end{array} \right)$ in $A''$. These rows imply that $T_{iq} = - X_{iq} < 0$ and $T_{ip} = - X_{ip} > 0$. Therefore,
         $$
         x_{iq} t_{ip} \mbox{ divides } \bm{z}^{\bm{y}^+} \mbox{ and } x_{ip} t_{iq} \mbox{ divides } \bm{z}^{\bm{y}^-}.
         $$
         The initial term of $\bm{z}^{\bm{y}^+} - \bm{z}^{\bm{y}^-}$ with respect to $>$ is $\bm{z}^{\bm{y}^+}$ since $x_{iq}$ divides $\bm{z}^{\bm{y}^+}$ and $x_{iq}$ is the greatest variable that appears in this binomial. But this implies that the initial term of $x_{iq} t_{ip} - x_{ip} t_{iq} b^{c_{iq} - c_{ip}} \in {\mathcal G}$ divides the initial term of $\bm{z}^{\bm{y}^+} - \bm{z}^{\bm{y}^-}$. Therefore, the initial term of all binomials associated with $K_2$ is divisible by the initial term of an element in ${\mathcal G}$.
     \item Consider now a general element in $S = \{ \bm{y} \in {\mathbb Z}^s ~:~ A \bm{y} = 0 \}, s = n+2N+1$, with no variables restricted to be zero. By the previous cases we may assume $y_x \ne 0, y_d \ne 0$. Let $D_i$ be the first nonzero component of $y_d$. As before, we may assume that $D_i > 0$. Therefore,
         $$
         d_i \mbox{ divides } \bm{z}^{\bm{y}^+}.
         $$
         Then there exists $X_{ik}> 0$, so
         $$
         x_{ik} d_i \mbox{ divides } \bm{z}^{\bm{y}^+}.
         $$
         Similarly, there exists $T_{ik} < 0$ and
         $$
         t_{ik} \mbox{ divides } \bm{z}^{\bm{y}^-},
         $$
         so the initial term of $\bm{z}^{\bm{y}^+} - \bm{z}^{\bm{y}^-}$ is $\bm{z}^{\bm{y}^+}$ because $d_i > t_{ik}$. This initial term is divisible by $x_{ik} d_i$, which is the initial term of $x_{ik} d_i - t_{ik} b^{c_{ik}}$. Therefore
         $$
         {\rm in}_{<}(I_A)  = \langle {\rm in}_{<} ({\mathcal G}) \rangle,
         $$
         which proves that ${\mathcal G}$ is a Gr\"{o}bner basis of $I_A$ with respecto to the term order $<$. Clearly, it is reduced.
   \end{enumerate}
   Moreover, with respect to the term order $<_{\bm{c}}$,
         $$
         {\rm in}_{<_{\bm{c}}} (x_{ik} d_i - t_{ik} b^{c_{ik}} ) = x_{ik} d_i,
         $$
         because the weight of the first monomial is equal to $c_{ik} > 0$, and the weight of the second monomial is equal to zero. Similarly,
         $$
         {\rm in}_{<_{\bm{c}}}(x_{iq} t_{ip} - x_{ip} t_{iq} b^{c_{iq} - c_{ip}}) = x_{iq} t_{ip}, 1 \le q < p < k_i,
         $$
         because the weight of the first monomial is $c_{iq} \ge c_{ip}$, which is the weight of the second monomial. If $c_{iq} = c_{ip}$, the tie is broken with the lexicographical order $x_{iq} > x_{ip}$.

         \qed
 \end{proof}

 The above theorem gives a {\em reduced} Gr\"{o}bner basis with respect to the term order induced by the objective function of $(LR)$. On the contrary, \cite[Thm. 4]{tayur} only provides a Gr\"{o}bner basis with respect to a lexicographical order, and not with respect to the term order needed for the computation of the test set. Therefore, in order for that Gr\"{o}bner basis to be applied to solve their problem one more computational step is required whereas our construction gives directly the answer with its consequent saving.

 \section{Computational results} \label{sec:computational}
 The previous construction of the test set is used in our computational experiments. In order to gain some insights of its efficiency, if a program like \texttt{4ti2} (\cite{4ti2}) were used to do the computation of the test set, a simple configuration of 10 subsystems with 3 components would take more than 60 minutes. Therefore, it is very important to apply the result in Theorem \ref{thm:base} to be able to construct the test set.

 Our algorithm has been coded in {\sc Matlab} and run on a AMD Opteron 252 (2.6 GHz) with 5 GB RAM. For Table \ref{tabla01}, all the data in the test problems are randomly generated from uniform distributions, with $l_{ij} = 0, u_{ij} = 4$ and $r_{ij} \in [0.99, 0.998]$, as in \cite{ruan}. The linear cost function $\sum_{i=1}^n \sum_{j=1}^{k_i} c_{ij} x_{ij}$ has values $c_{ij} \in [10, 20]$.

 The number $n$ is the number of subsystems, and $k$ is the number of different components in each subsystem.

 \begin{table}[ht]
 \caption{$R_0 = 0.90, r_{ij} \in [0.99, 0.998]$}
 \label{tabla01}
 \begin{tabular}{ccrrrr} \hline\noalign{\smallskip} $n$ & $k$ && {\footnotesize  Nodes} & {\footnotesize Iter. R-S} &{\footnotesize  Avg. CPU time (s)} \\ \noalign{\smallskip}\hline\noalign{\smallskip} $10$ & $2$ & & 0 & $696$ & $0.0$ \\ $10$ & $3$ & & 0 & $5797$ & $0.0$ \\ $10$ & $5$ & & 0 & $26427$ & $0.0$ \\ $15$ & $2$ & &$0$& $15184$ & $0.0$  \\ $15$ & $3$ & & $0.4$ & $85103$ & $0.1$  \\ $20$ & $2$ && $7041$ &$294747$& $276.0$ \\ \noalign{\smallskip}\hline
  \end{tabular}
 \end{table}

 The average CPU time, and the average number of generated nodes during the algorithm has been obtained by running the program for 10 instances.

 The column ``Iter. R-S'' contains the number of iterations according to the results given in \cite[Table 1]{ruan}. Comparing our results with \cite{ruan}, not only the CPU time is improved, but also the effort measured by the number of processed nodes by the walk back procedure is less than the number of iterations in \cite{ruan}. We also point out that the iterations in \cite{ruan} compute two Lagrangian discrete relaxations and their corresponding solutions for the best value, each time. After that, to discard remaining boxes in their branch and bound tree, reliability of that solution is needed. In our method in each iteration we only compute a node by adding a vector, and then compute its reliability.


 In order to better illustrate the results, new tests have been done with the additional hypothesis that a greater reliability in a component implies a greater cost. Note that if there is no correlation between cost and reliability of a component (as in \cite{ruan}), then it is likely that certain components are not going to be used, because only more reliable components are going to be chosen regardless of their cost. Hence the dimensionality of the problem is artificially reduced. The results of this more realistic case appear in Table \ref{tabla02}.
 \begin{table}[ht]
 \caption{$R_0 = 0.90, r_{ij} \in [0.99, 0.998]$ ({\bf ordered})}
 \label{tabla02}
 \begin{tabular}{ccrrrr} \hline\noalign{\smallskip} $n$ & $k$ && {\footnotesize  Nodes} &  &{\footnotesize  Avg. CPU time (s)} \\ \noalign{\smallskip}\hline\noalign{\smallskip} $10$ & $2$ & & $0$ & & $0.0$ \\ $10$ & $3$ & & $0$ & & $0.0$ \\ $10$ & $5$ & & $0$ & & $0.0$ \\ $15$ & $2$ & &$45$& & $0.2$ \\ $15$ & $3$ & & $661$ && $4.4$  \\ $15$ & $4$ & & $28023$ & & $10571$ \\ $17$ & $2$ && $12578$ && $1355$ \\ \noalign{\smallskip}\hline
  \end{tabular}
 \end{table}
 It is clear the increasing computational effort showed by rows $n=15, k=3$ and $n=15, k=4$. From the above, we conclude that the computational experiments for this model should be done with this additional hypothesis of correlation between cost and reliability of each component.

 We also note that the algorithm is very sensitive to changes in the value of the reliability parameters $r_{ij}$. For example, for less reliable components, $r_{ij} \in [0.980, 0.990]$ and the same value, $R_0 = 0.90$, for the overall reliability, we have obtained the results in Table \ref{tabla03}.
 \begin{table}[ht]
 \caption{$R_0 = 0.90, {\mathbf r}_{ij} \in [\mathbf{0.98}, \mathbf{0.99}]$ ({\bf ordered})}
 \label{tabla03}
 \begin{tabular}{ccrrrr} \hline\noalign{\smallskip} $n$ & $k$ && {\footnotesize  Nodes} & &{\footnotesize  Avg. CPU time (s)} \\ \noalign{\smallskip}
  \hline \noalign{\smallskip}$6$ & $4$ & & $14$ & & $0$ \\ $6$ & $5$ & & $39$ & & $0.1$ \\ $7$ & $4$ & & $1186$ & & $7.4$ \\ $7$ & $5$ & &$5662$& & $140$ \\ $8$ & $4$ & & $46709$ && $7010$  \\ \noalign{\smallskip}\hline
  \end{tabular}
 \end{table}
 The reader may observe that the system sizes that can be solved are smaller. However, we have to point out that an exact solution has been found in all the examples. An interesting remark is that the elapsed time is significantly reduced if the algorithm is stopped with the first best point found in the walk back procedure. Obviously, this approach does not guarantee optimality but it gives very accurate approximations. From this observation, we think that a promising open field is the combination of this technique with heuristic methods to get a good approximation of the optimal solution.

 \section{Conclusion} \label{sec:conclusiones}
 We have presented in this paper an exact method for solving a nonlinear integer programming problem arising from the design of series-parallel reliability systems. The method is based on the construction of a test set of an integer linear problem through the theory of Gr\"{o}bner bases. We provide an explicit formula of the test set, avoiding the high cost of this computation. Computational tests show that this approach improves existing methods already applied for this problem.

 This paper deepens the challenge given in \cite{tayur} to yield efficient algorithms for problems in integer problems based on attractive bases.


\end{document}